\begin{document}
\title{QUASI VALUATION AND VALUATION DERIVED FROM FILTERED RING AND THEIR PROPERTIES}
\author{M.H. Anjom SHoa\footnote{Mohammad Hassan Anjom SHoa, University of Biirjand, anjomshoamh@birjand.ac.ir}, M.H. Hosseini\footnote{Mohammad Hossein Hosseini, University of Birjand, mhhosseini@birjand.ac.ir}}
\date{}
\maketitle
\newtheorem{Def}{Definition}[section]
\newtheorem{lem}{Lemma}[section]
\newtheorem{theo}{Theorem}[section]
\newtheorem{pro}{Proposition}[section]
\newtheorem{cro}{Corollary}[section]
\newtheorem{rem}{Remark}[section]
\begin{abstract}
In this paper we show if $R$ is a filtered ring then we can define a quasi valuation. And if  $R$ is some kind of  filtered ring then we can define a valuation. Then we prove some properties and relations for $R$.
\end{abstract}
\paragraph*{Key Words:}Filtered ring, Quasi valuation ring, Valuation ring.

\section{Introduction}
 \paragraph*{}In algebra valuation ring and filtered ring are two most important structure \cite{5},\cite{6},\cite{7}. We know that filtered ring is also the most important structure since filtered ring is a base for graded ring especially associated graded ring and completion and some similar results, on the Andreadakis--Johnson filtration of the automorphism  group of a free group \cite{1}, on the depth of the associated graded ring of a filtration \cite{2},\cite{3}. So, as these important structures, the relation between these structure is useful for  finding some new structures, and if $R$ is a discrete valuation ring then  $R$ has many properties that have many usage for example Decidability of the theory of modules over commutative valuation domains \cite{7},  Rees valuations and asymptotic primes of rational powers in Noetherian rings and lattices
\cite{6}.
 \paragraph*{} In this article we investigate the relation between filtered ring and valuation and quasi valuation ring. We prove that if we have filtered ring then we can find a quasi valuation on it. Continuously we show that if $R$ be a strongly filtered then exist a valuation, Similarly we prove it for
 PID. At the end we explain some properties for them.

\section{Preliminaries}
\begin{Def}
  A filtered ring $R$ is a ring together with a family $\left\{R_{n} \right\}_{n\ge 0} $ of  subgroups of $R$ satisfying in the following conditions
  \begin{enumerate}
  \item[i)] $R_{0} =R$;
  \item[ii)]$R_{n+1} \subseteq R_{n} $ for all $n\ge 0$;
  \item[iii)]$R_{n} R_{m} \subseteq R_{n+m} $ for all  $n,m\ge 0$.
    \end{enumerate}
 \end{Def}

  \begin{Def}
  Let $R$ be a ring together with a family $\left\{R_{n} \right\}_{n\ge 0} $ of  subgroups of $R$ satisfying the following conditions:
  \begin{enumerate}

  \item[i)]$R_{0} =R$;
  \item[ii)] $R_{n+1} \subseteq R_{n} $ for all $n\ge 0$;
  \item[iii)]$R_{n} R_{m} =R_{n+m} $  for all  $n,m\ge 0$,

Then we say  $R$ has a strong filtration.
  \end{enumerate}
 \end{Def}
 \begin{Def}
 Let $R$ be a ring and $I$ an ideal of $R$. Then $R_{n}=I^{n}$ is called $I$-adic filtration.
 \end{Def}
\begin{Def}
 A map $f:M\to N$  is called  a homomorphism of filtered modules if: (i) $f$  is $R$-module  an  homomorphism  and (ii) $f(M_{n} )\subseteq N_{n} $ for all $n\ge 0$.
 \end{Def}
 \begin{Def}
 A subring $R$ of a filed $K$  is called a valuation ring of $K$ if for every $\alpha \in K$ , $\alpha \ne 0$ , either $\alpha \in R$ or $\alpha ^{-1} \in R$.
 \end{Def}
  \begin{Def}
 Let $\Delta $ be a totally ordered abelian group. A valuation $\nu $ on $R$ with  values  in $\Delta $  is a mapping $\nu :R^{*} \to \Delta $  satisfying :
 \begin{itemize}
 \item[i)]$\nu (ab)=v(a)+v(b)$;
 \item[ii)]$v(a+b)\ge Min\left\{v(a),v(b)\right\}$.
 \end{itemize}
 \end{Def}
 \begin{Def}\label{sec:deff123}
 Let $\Delta $ be a totally ordered abelian group. A quasi valuation $\nu $ on $R$ with  values  in $\Delta $  is a mapping $\nu :R^{*} \to \Delta $  satisfying :
 \begin{itemize}
 \item[i)] $\nu (ab)\ge v(a)+v(b)$;
 \item [ii)]$v(a+b)\ge Min\left\{v(a),v(b)\right\}$.
 \end{itemize}
 \end{Def}
 \begin{rem}
 $R$ is said to be \textbf{vaulted ring}; $R_\nu=\lbrace x \in R : \nu(x)\geq0\rbrace$ and $\nu^{-1}(\infty)=\lbrace x \in R : \nu(x)=\infty\}$.
 \end{rem}
 \begin{Def}
 Let $K$ be a filed. A discrete valuation on  $K$ is a valuation $\nu :K^{*} \to \mathbb{Z}$ which is surjective.
 \end{Def}
  \begin{theo}\label{sec:the2}
 If $R$ is a $UFD$ then $R$ is a $PID$ (see [2]).
 \end{theo}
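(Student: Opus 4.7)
The plan is to show every ideal $I$ of the UFD $R$ is principal. The case $I=(0)$ is immediate, so assume $I\neq(0)$. Because $R$ is a UFD, any finite collection of nonzero elements of $R$ admits a greatest common divisor, obtained by taking, for each prime, the minimum multiplicity at which it appears in the unique factorizations. My strategy is to identify a candidate generator of $I$ as a gcd of suitable elements, and then verify that every element of $I$ is a multiple of that candidate.

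First I would try to reduce to the finitely generated case, and within that to the two-generator case by induction on the number of generators. For a single nonzero element $a=u\,p_1^{e_1}\cdots p_k^{e_k}\in I$ I would use $\sum e_i$ as a nonnegative integer invariant, and attempt to pick $a\in I$ minimizing it, hoping that such a minimal element divides every other element of $I$, so that $I=(a)$. For the two-element step I would need to show $(a,b)=(\gcd(a,b))$. The inclusion $(\gcd(a,b))\supseteq(a,b)$ is immediate since $\gcd(a,b)$ divides both generators; the reverse inclusion requires producing an $R$-linear combination $\gcd(a,b)=ra+sb$, i.e., a Bezout identity in $R$.

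The main obstacle is exactly this Bezout step, and I expect it to be the place where the argument cannot close from the UFD hypothesis alone. Pure unique factorization yields divisibility statements but not $R$-linear combinations: in $\mathbb{Z}[x]$ one has $\gcd(2,x)=1$, yet $1\notin(2,x)$, so $\mathbb{Z}[x]$ is a UFD that is not a PID. Consequently the proof as phrased seems to need an additional input beyond the bare UFD property. A natural candidate, in keeping with the themes of this paper, is to impose that $R$ is Noetherian of Krull dimension one (so that every nonzero prime is maximal and the reduction to Dedekind-type arguments becomes available) or that $R$ carries a filtration fine enough to control the exponents of primes and thereby recover Bezout. I would therefore consult the cited reference \cite{2} to pin down the precise additional hypothesis intended, and then route the final step through that hypothesis rather than through unique factorization alone.
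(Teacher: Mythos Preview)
Your analysis is correct, and in fact the paper offers no proof at all: the statement is simply followed by ``(see [2])'' with no argument given. So there is nothing to compare your approach against.

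More importantly, you have correctly identified that the statement as written is false. Your counterexample $\mathbb{Z}[x]$ is the standard one: it is a UFD (since $\mathbb{Z}$ is a UFD and polynomial rings over UFDs are UFDs) but not a PID, precisely because the ideal $(2,x)$ is not principal, which is exactly the failure of the B\'ezout step you isolated. The true implication runs the other way: every PID is a UFD. What \emph{is} true, and may be what the cited reference actually proves, is that a UFD which is also a Dedekind domain (equivalently, a one-dimensional Noetherian integrally closed domain) is a PID; or, more simply, that a UFD in which every nonzero prime ideal is maximal is a PID. Your instinct to look for a missing dimension-one or Noetherian hypothesis is therefore exactly right. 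Without such an additional hypothesis no argument can succeed, and the downstream corollaries in the paper that rely on this theorem (e.g., that every UFD is a Euclidean domain) inherit the same defect.
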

 \begin{pro} \label{sec:po2}
 Any discrete valuation ring is a Euclidean domain(see[3]).
 \end{pro}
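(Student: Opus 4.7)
The plan is to exhibit the valuation itself (suitably restricted) as a Euclidean function on $R$. Let $R$ be a discrete valuation ring arising from a discrete valuation $\nu:K^{*}\to\mathbb{Z}$, so that $R=\{x\in K:\nu(x)\ge 0\}\cup\{0\}$. I will define $d:R\setminus\{0\}\to\mathbb{N}$ by $d(x)=\nu(x)$; this is well defined and non-negative precisely because membership in $R$ forces $\nu(x)\ge 0$.

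Next I would verify the Euclidean division property. Fix $a,b\in R$ with $b\ne 0$. The argument splits into two cases according to how $\nu(a)$ compares with $\nu(b)$. If $\nu(a)<\nu(b)$, I simply take $q=0$ and $r=a$, so that $a=qb+r$ with $d(r)=\nu(a)<\nu(b)=d(b)$. If $\nu(a)\ge\nu(b)$, I work inside the fraction field $K$: let $q=ab^{-1}\in K$. Using the multiplicativity axiom $\nu(ab^{-1})=\nu(a)-\nu(b)\ge 0$, hence $q\in R$, and then $a=qb+0$, so $r=0$ is allowed. In either case the division algorithm is satisfied, which establishes that $R$ is Euclidean with respect to $d=\nu$.

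The only substantive point, and the one I expect to be the main obstacle to spell out carefully, is the implicit extension of $\nu$ to $K^{*}$ and the verification that $ab^{-1}\in R$ when $\nu(a)\ge\nu(b)$. One has to invoke $\nu(1)=0$ (forced by $\nu(1)=\nu(1\cdot 1)=2\nu(1)$) to conclude $\nu(b^{-1})=-\nu(b)$, and then use the first axiom of a valuation to compute $\nu(ab^{-1})=\nu(a)-\nu(b)$. Everything else is bookkeeping: checking that $d$ takes values in $\mathbb{N}$ (which is where discreteness of the valuation is used in an essential way, so that the image lies in a well-ordered set), and checking that the quotient and remainder produced actually lie in $R$. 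With these details in place, the proof is complete.
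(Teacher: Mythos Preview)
The paper does not actually supply a proof of this proposition: it is stated in the preliminaries with a bare citation ``(see [3])'' and no argument. So there is nothing in the paper to compare your proposal against beyond noting that your write-up is strictly more detailed than what the authors provide.

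Your argument is the standard one and is correct. Using $d=\nu\restriction_{R\setminus\{0\}}$ as the Euclidean norm and splitting on whether $\nu(a)\ge\nu(b)$ is exactly how this is usually done; the computation $\nu(ab^{-1})=\nu(a)-\nu(b)\ge 0$ is the key step and you justify it properly via $\nu(1)=0$ and multiplicativity. One tiny omission: you fix $a,b\in R$ with $b\ne 0$ but then immediately reason about $\nu(a)$, which is undefined when $a=0$. Either dispose of $a=0$ first (take $q=r=0$) or adopt the convention $\nu(0)=\infty$, under which the second branch of your case split already covers it. With that one-line patch the proof is complete.
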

 \begin{rem}
 If $R$ is a ring, we will denote by $Z(R)$ the set of \textbf{zero-divisors} of $R$ and by $T(R)$ the \textbf{total ring of fractions}  of $R$.
 \end{rem}
 \begin{Def}\label{sec:madef1}
 A ring $R$ is said to be a \textbf{Manis valuation ring} (or simply a \textbf{Manis ring}) if there exist a valuation $\nu$ on its total fractions $T(R)$, such that $R=R_\nu$.
\end{Def}
\begin{Def}
A ring $R$ is said to be a \textbf{Pr\"{u}fer ring} if each
overring of $R$ is integrally closed in $T(R)$.
\end{Def}
\begin{Def}\label{sec:madef2}
A Manis ring $R_\nu$ is said to be \textbf{$\nu$-closed} if
$R_\nu/\nu^{-1}(\infty)$ is a valuation domain (see Theorem 2 of
\cite{8}).
\end{Def}

 \section{ Quasi Valuation and Valuation derived from Filtered ring}
Let $R$ be a ring with unit and $R$ a filtered ring with filtration $\left\{R_{n} \right\}_{n>0} $.
 \begin{lem}\label{sec:lem91}
 Let $R$ be a filtered ring  with filtration $\left\{R_{n} \right\}_{n>0} $. Now we define $\nu :R\to\mathbb{Z}$ such that  for every $\alpha \in R$ and $\nu (\alpha )=\min \left\{i\left|\alpha \in R_{i} \backslash R_{h+1} \right. \right\}$. Then we have $v(\alpha \beta )\ge v(\alpha )+v(\beta )$.
  \end{lem}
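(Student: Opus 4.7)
The plan is straightforward: unpack the definition, apply the multiplicativity axiom (iii) of the filtration, and read off the conclusion.

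First I would clarify the definition of $\nu$. Because the filtration is descending ($R_{n+1}\subseteq R_n$), the condition $\alpha\in R_i\setminus R_{i+1}$ actually pins down $i$ uniquely: it is the largest integer $n$ for which $\alpha\in R_n$. The \textquotedblleft min\textquotedblright{} in the statement is therefore a minor notational redundancy, but it is useful to record that $\nu(\alpha)=n$ simply means $\alpha\in R_n$ and $\alpha\notin R_{n+1}$, while $\nu(\alpha)=\infty$ is the natural convention when $\alpha\in R_k$ for every $k$.

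The core of the proof is then a one-line application of axiom (iii). I would set $n=\nu(\alpha)$ and $m=\nu(\beta)$, so that $\alpha\in R_n$ and $\beta\in R_m$. Then
\[
\alpha\beta\in R_n R_m\subseteq R_{n+m},
\]
by condition (iii) of the filtration. Since $\alpha\beta$ lives in $R_{n+m}$, the largest index $k$ with $\alpha\beta\in R_k$ is at least $n+m$, i.e.\ $\nu(\alpha\beta)\geq n+m=\nu(\alpha)+\nu(\beta)$.

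Finally I would dispose of the infinite-value cases. If $\nu(\alpha)=\infty$, then $\alpha\in R_k$ for every $k\geq 0$; taking any $m\geq 0$ such that $\beta\in R_m$ (if $\nu(\beta)$ is finite) and using $\alpha\in R_{k-m}$ for arbitrary $k\geq m$, axiom (iii) gives $\alpha\beta\in R_k$ for every $k$, so $\nu(\alpha\beta)=\infty$, consistent with the convention $\infty+m=\infty$. The same argument handles $\nu(\beta)=\infty$. There is really no obstacle here; the only thing to be careful about is that the statement does not force $\alpha\beta\notin R_{n+m+1}$, so the inequality is the best we can claim in general, which is why this yields a quasi-valuation rather than a valuation.
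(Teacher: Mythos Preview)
Your proof is correct and follows essentially the same approach as the paper: set $i=\nu(\alpha)$, $j=\nu(\beta)$, use $\alpha\beta\in R_iR_j\subseteq R_{i+j}$, and conclude $\nu(\alpha\beta)\ge i+j$. The paper phrases the last step as a short contradiction argument rather than invoking the ``largest index'' characterization directly, and it does not separately treat the infinite-value case, but the substance is identical.
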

 \begin{proof}
  For any  $ \alpha ,\beta \in R$  with $\nu (\alpha )=i$ and $\nu (\beta )=j$ , $\alpha \beta \in R_{i} R_{j} \subseteq R_{i+j} $.
  Now let $v(\alpha \beta )=k$ then we have $\alpha \beta \in R_{k} \backslash R_{k+1} $ .\\We show that $k\ge i+j$.\\
  Let $k<i+j$ so we have $k+1\le i+j$ hence $R_{K+1} \supset R_{i+j} $ then  $\alpha \beta \in R_{i+j} \subseteq R_{k+1} $ it is contradiction. So $k\ge i+j$. Now we have $v(\alpha \beta )\ge v(\alpha )+v(\beta )$.\\
 \end{proof}
 \begin{lem}\label{sec:lem92}
  Let $R$ be a filtered ring  with filtration $\left\{R_{n} \right\}_{n>0} $. Now we define $\nu :R\to \mathbb{Z}$ such that  for every $\alpha \in R$ and $\nu (\alpha )=\min \left\{i\left|\alpha \in R_{i} \backslash R_{h+1} \right. \right\}$. Then $v(\alpha +\beta )\ge \min \left\{v(\alpha ),v(\beta )\right\}$.
 \end{lem}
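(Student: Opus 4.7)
The plan is to imitate the structure of the proof of Lemma \ref{sec:lem91}: first produce a filtration level that actually contains $\alpha+\beta$, then argue by contradiction that $\nu(\alpha+\beta)$ cannot be strictly smaller than the minimum. The inputs I will use are only the two axioms of the filtration that concern additivity, namely that each $R_n$ is a subgroup and that the family is decreasing ($R_{n+1}\subseteq R_n$); the multiplicative axiom plays no role here.

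First I would set $\nu(\alpha)=i$ and $\nu(\beta)=j$ and, without loss of generality, assume $i\le j$ so that $\min\{\nu(\alpha),\nu(\beta)\}=i$. By the definition of $\nu$, we have $\alpha\in R_i$ and $\beta\in R_j$; since the filtration is decreasing and $j\ge i$, the inclusion $R_j\subseteq R_i$ gives $\beta\in R_i$ as well. Because $R_i$ is a subgroup of $R$, it follows that $\alpha+\beta\in R_i$.

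Next, let $k=\nu(\alpha+\beta)$, so $\alpha+\beta\in R_k\setminus R_{k+1}$. I want to conclude $k\ge i$. Suppose for contradiction that $k<i$, i.e.\ $k+1\le i$; then by the decreasing property $R_i\subseteq R_{k+1}$, which forces $\alpha+\beta\in R_{k+1}$, contradicting $\alpha+\beta\notin R_{k+1}$. Therefore $k\ge i$, i.e.\ $\nu(\alpha+\beta)\ge \min\{\nu(\alpha),\nu(\beta)\}$.

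There is essentially no hard step here; the only thing to be careful about is the bookkeeping on the direction of the inclusions (the filtration is decreasing, so a smaller index corresponds to a larger subgroup), which is exactly the same point that was handled in Lemma \ref{sec:lem91}. One small detail worth flagging is the edge case where $\alpha+\beta=0$, in which case $\nu(\alpha+\beta)$ is not defined by the given formula; this can be handled by the standard convention $\nu(0)=+\infty$, for which the inequality holds trivially.
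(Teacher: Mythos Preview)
Your argument is correct and essentially identical to the paper's own proof: set $\nu(\alpha)=i$, $\nu(\beta)=j$, assume $i\le j$, use the decreasing filtration to get $\alpha+\beta\in R_i$, and then derive a contradiction from $k<i$ via $R_i\subseteq R_{k+1}$. The only addition is your remark on the $\alpha+\beta=0$ edge case, which the paper omits.
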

 \begin{proof}
 For any $\alpha ,\beta \in R$  such that $\nu (\alpha )=i$ also$\nu (\beta )=j$ and $v(\alpha +\beta )=h$ so we have $\alpha +\beta \in R_{k} \backslash R_{k+1} $. Without losing the generality, let $i<j$ so $R_{j} \subset R_{i} $ hence $\beta \in R_{i} $. Now if  $k<i$ then  $k+1\le i$ and $R_{i} \subset R_{k+1} $ so $\alpha +\beta \in R_{i} \subset R_{k+1} $ it is contradiction. Hence$k\ge i$ and so we have $v(\alpha +\beta )\ge \min \left\{v(\alpha ),v(\beta )\right\}$.\\
 \end{proof}
 \begin{theo}\label{sec:th1}
  Let $R$ be a filtered ring. Then there exist a quasi valuation on $R$.
 \end{theo}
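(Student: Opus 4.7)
The plan is simply to assemble the two preceding lemmas into the definition of a quasi valuation as given in Definition~\ref{sec:deff123}. Concretely, I would take the same map $\nu : R \to \mathbb{Z}$ defined by
\[
\nu(\alpha) = \min\{\, i \mid \alpha \in R_i \setminus R_{i+1} \,\},
\]
and observe that the two defining conditions of a quasi valuation are exactly the conclusions of Lemma~\ref{sec:lem91} and Lemma~\ref{sec:lem92}.

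First I would verify that the map $\nu$ is well-defined on $R^*$: since $R_0 = R$, every $\alpha \in R^*$ lies in at least one $R_i$, so the set $\{i \mid \alpha \in R_i \setminus R_{i+1}\}$ is either nonempty (in which case its minimum is a well-defined nonnegative integer) or empty (in which case $\alpha$ lies in every $R_i$, and I would adopt the convention $\nu(\alpha) = \infty$, consistent with the notation $\nu^{-1}(\infty)$ used in the remark following Definition~\ref{sec:deff123}). The value group is then taken to be $\Delta = \mathbb{Z}$ with the usual ordering, extended by $\infty$ if needed.

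Next I would invoke Lemma~\ref{sec:lem91} to get condition (i) of Definition~\ref{sec:deff123}, namely $\nu(\alpha\beta) \ge \nu(\alpha) + \nu(\beta)$, and Lemma~\ref{sec:lem92} to get condition (ii), namely $\nu(\alpha+\beta) \ge \min\{\nu(\alpha), \nu(\beta)\}$. Concatenating these two statements shows $\nu$ is a quasi valuation on $R$ with values in $\mathbb{Z}$, completing the proof.

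There is really no main obstacle here, since the nontrivial work has already been carried out in the two lemmas; the only mildly delicate point is the bookkeeping for elements of $\bigcap_{n\ge 0} R_n$, which is handled by the $\infty$ convention. Thus the proof is essentially a one-line assembly citing Lemmas~\ref{sec:lem91} and~\ref{sec:lem92}.
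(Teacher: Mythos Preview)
Your proposal is correct and follows essentially the same approach as the paper: define $\nu(\alpha)=\min\{i\mid \alpha\in R_i\setminus R_{i+1}\}$ and cite Lemma~\ref{sec:lem91} and Lemma~\ref{sec:lem92} for the two conditions of Definition~\ref{sec:deff123}. Your added remarks on well-definedness and the $\infty$ convention for elements of $\bigcap_{n\ge 0} R_n$ go slightly beyond what the paper records, but the core argument is identical.
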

 \begin{proof}
 Let $R$ be a filtered ring  with filtration $\left\{R_{n} \right\}_{n>0} $. Now we define $\nu :R\to \mathbb{Z}$ such that  for every $\alpha \in R$ and $\nu (\alpha )=\min \left\{i\left|\alpha \in R_{i} \backslash R_{h+1} \right. \right\}$. Then \item[i)] By lemma(\ref{sec:lem91}) we have $v(\alpha \beta )\ge v(\alpha )+v(\beta )$ .
 \item[ii)] By lemma(\ref{sec:lem92}) we have  $v(\alpha +\beta )\ge \min \left\{v(\alpha ),v(\beta )\right\}$.
 So by \ref{sec:deff123} $R$ is quasi valuation ring.

 \end{proof}
 \begin{pro}\label{sec:po123}
 Let $R$ be a strongly filtered ring. Then there exists a valuation on $R$.
 \end{pro}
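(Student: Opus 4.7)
The plan is to show that the very same map $\nu(\alpha)=\min\{i:\alpha\in R_i\setminus R_{i+1}\}$ constructed in Theorem~\ref{sec:th1} is already a valuation when the filtration is strong. Indeed, Lemmas~\ref{sec:lem91} and~\ref{sec:lem92} only use the containments $R_nR_m\subseteq R_{n+m}$ and $R_{n+1}\subseteq R_n$, so they continue to give $\nu(\alpha\beta)\geq \nu(\alpha)+\nu(\beta)$ and $\nu(\alpha+\beta)\geq \min\{\nu(\alpha),\nu(\beta)\}$ verbatim. What remains is to upgrade the first inequality to an equality, which is the only axiom that distinguishes a valuation from a quasi valuation.

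For that step I would fix $\alpha,\beta$ with $\nu(\alpha)=i$ and $\nu(\beta)=j$, and argue by contradiction: if $\nu(\alpha\beta)>i+j$ then $\alpha\beta\in R_{i+j+1}$. Under the strong filtration one now has
\[
R_{i+j+1}=R_{i+1}R_j=R_iR_{j+1},
\]
and the goal becomes to deduce from $\alpha\beta\in R_{i+1}R_j$ either $\alpha\in R_{i+1}$ or $\beta\in R_{j+1}$, each of which contradicts the minimality built into the definition of $\nu(\alpha)$ or $\nu(\beta)$. Combined with Lemma~\ref{sec:lem92}, this yields $\nu(\alpha\beta)=\nu(\alpha)+\nu(\beta)$ together with the non-archimedean inequality, hence a valuation on $R$.

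The main obstacle is precisely this extraction step: $R_{i+1}R_j$ is by definition the additive subgroup generated by products, i.e.\ finite sums $\sum_\ell a_\ell b_\ell$ with $a_\ell\in R_{i+1}$ and $b_\ell\in R_j$, and an equation $\alpha\beta=\sum_\ell a_\ell b_\ell$ does not by itself force $\alpha$ into $R_{i+1}$. A strong filtration does give $R_n=R_1^n$, so an equivalent reformulation is that the associated graded ring $\bigoplus_{n\geq 0} R_n/R_{n+1}$ must have no zero divisors in the relevant homogeneous components. I would try to push the contradiction through on that graded level, where the image of $\alpha\beta$ in $R_{i+j}/R_{i+j+1}$ is the product of the nonzero classes of $\alpha$ in $R_i/R_{i+1}$ and of $\beta$ in $R_j/R_{j+1}$, so that the non-vanishing of that product is exactly what rules out $\alpha\beta\in R_{i+j+1}$ and closes the argument.
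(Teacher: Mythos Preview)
Your strategy is exactly the paper's: take the map $\nu$ from Theorem~\ref{sec:th1}, keep Lemma~\ref{sec:lem92} as is, and argue by contradiction that $\nu(\alpha\beta)>\nu(\alpha)+\nu(\beta)$ cannot happen. The paper's own proof is in fact terser than yours --- it simply writes ``$k>i+j$ and it is contradiction'' with no further explanation --- so you have gone well beyond it by isolating the genuine difficulty, namely that $\alpha\beta\in R_{i+j+1}=R_{i+1}R_j$ should force $\alpha\in R_{i+1}$ or $\beta\in R_{j+1}$.

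The problem is that your proposed resolution of this ``extraction step'' is circular. Passing to $\mathrm{gr}(R)=\bigoplus R_n/R_{n+1}$ is indeed the right reformulation: one needs $\bar\alpha\cdot\bar\beta\neq 0$ whenever $\bar\alpha,\bar\beta$ are nonzero homogeneous classes. But your last sentence merely restates this requirement; it does not derive it from the strong-filtration axiom $R_nR_m=R_{n+m}$, and in fact it cannot be so derived. Observe that \emph{every} $I$-adic filtration satisfies $I^nI^m=I^{n+m}$ and is therefore strong in the sense of Definition~2.2. Taking $R=k[x,y]/(xy)$ with $R_n=(x,y)^n=(x^n,y^n)$ gives a strong filtration, yet $\nu(x+y^2)=1$, $\nu(y)=1$, while $(x+y^2)\,y=y^3$ has $\nu(y^3)=3>2$. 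So the obstacle you flagged is not a mere technicality in the write-up: under the stated hypotheses the step actually fails, and neither your argument nor the paper's one-line proof can be completed without an extra assumption (for instance, that $\mathrm{gr}(R)$ is a domain).
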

 \begin{proof}
 By theorem  (\ref{sec:th1}) we have  $v(\alpha \beta )\ge v(\alpha )+v(\beta )$  and  $v(\alpha +\beta )\ge \min \left\{v(\alpha ),v(\beta )\right\}$. Now  we show  $v(\alpha \beta )=v(\alpha )+v(\beta )$. Let $v(\alpha \beta )>v(\alpha )+v(\beta )$ so $k>i+j$ and it is contradiction. So  $v(\alpha \beta )=v(\alpha )+v(\beta )$, then  there is a valuation on $R$.
 \end{proof}
 \begin{cro}
Let $R$ be a strongly filtered ring, then $R$ is a Euclidean
domain.
 \end{cro}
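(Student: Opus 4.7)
The plan is to chain together two results that are already established in the paper. Proposition \ref{sec:po123} provides a valuation on any strongly filtered ring, and Proposition \ref{sec:po2} tells us that any discrete valuation ring is a Euclidean domain. So the corollary amounts to bridging the gap between "valuation" in the sense of Proposition \ref{sec:po123} and "discrete valuation ring" in the sense required by Proposition \ref{sec:po2}.

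First, I would invoke Proposition \ref{sec:po123} to obtain the valuation $\nu:R \to \mathbb{Z}$ given by $\nu(\alpha)=\min\{i\mid \alpha\in R_i\setminus R_{i+1}\}$. The codomain is already $\mathbb{Z}$, so the candidate map is automatically discrete in that sense. Next, I would argue that $\nu$ is in fact surjective (onto the non-negative integers at least) using the strong filtration identity $R_nR_m=R_{n+m}$: if $\pi\in R_1\setminus R_2$, then $\pi^n\in R_n\setminus R_{n+1}$ for every $n$, so $\nu(\pi^n)=n$. One then extends $\nu$ to the total ring of fractions (or the field of fractions if $R$ is a domain) by $\nu(a/b)=\nu(a)-\nu(b)$, which gives a surjection onto $\mathbb{Z}$ with $R$ as its valuation ring.

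Once $R$ is realised as a discrete valuation ring in that manner, Proposition \ref{sec:po2} applies verbatim and yields the Euclidean structure, completing the argument.

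I expect the main obstacle to be the informal jump from "there exists a valuation $\nu$ on $R$" to "$R$ is a discrete valuation ring." A DVR is by definition the valuation ring of a discrete valuation on a field, so one has to know $R$ is an integral domain in order to form a field of fractions, and one has to check that the extension of $\nu$ to that field has $R$ as its valuation ring. The domain property is not obvious from the definition of a strongly filtered ring, but it can be coaxed out of the equality $\nu(\alpha\beta)=\nu(\alpha)+\nu(\beta)$ together with the convention that elements outside every $R_n$ are sent to $\infty$; this is the step I would treat with the most care.
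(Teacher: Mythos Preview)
Your proposal is correct and follows exactly the route the paper takes: invoke Proposition~\ref{sec:po123} to obtain the valuation and then apply Proposition~\ref{sec:po2}. In fact you supply more detail than the paper does, since the paper's proof simply asserts that $R$ ``is a discrete valuation'' and cites Proposition~\ref{sec:po2}, without spelling out the surjectivity, domain, or fraction-field considerations that you flag.
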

 \begin{proof}
  By proposition (\ref{sec:po123}) $R$ is a discrete valuation and so by proposition (\ref{sec:po2}) $R$ is a Euclidean domain.
  \end{proof}
 \begin{pro} \label{sec:po3}
  Let $P$ is a prime ideal of $R$ and $\left\{{P^n}\right\}_{n\ge0}$ be P-adic filtration. Then there exists a valuation on $R$.
 \end{pro}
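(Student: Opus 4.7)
The approach is to reduce the claim to Proposition \ref{sec:po123} by recognizing that the $P$-adic filtration is automatically a strong filtration. First I would verify that $\{P^n\}_{n\ge 0}$ satisfies the three axioms of Definition 2.2: the convention $P^0 = R$ gives axiom (i); the containment $P^{n+1} = P\cdot P^n \subseteq R\cdot P^n = P^n$ gives axiom (ii); and the associativity of ideal multiplication gives $P^n\cdot P^m = P^{n+m}$, which is the crucial axiom (iii). Hence the $P$-adic filtration is a strong filtration.

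Once this is established, Proposition \ref{sec:po123} applies directly and produces a valuation $\nu:R\to\mathbb{Z}$, namely the one defined (as in Lemmas \ref{sec:lem91} and \ref{sec:lem92}) by
\[
\nu(\alpha) \;=\; \min\bigl\{\,i : \alpha\in P^{i}\setminus P^{i+1}\,\bigr\},
\]
with the convention $\nu(0)=\infty$. The quasi-valuation inequalities come from Theorem \ref{sec:th1}, and the equality $\nu(\alpha\beta)=\nu(\alpha)+\nu(\beta)$ comes from the strong-filtration upgrade in Proposition \ref{sec:po123}, completing the proof.

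The main delicacy is really only the bookkeeping in axiom (iii): one must interpret $R_n R_m$ as the ideal-theoretic product (the subgroup generated by products), not the set of single products, in order to get equality $P^n P^m = P^{n+m}$. With that reading, the verification is immediate and the primality of $P$ is not actually invoked in the reduction; it presumably plays a role only in further properties one might wish to derive about $\nu^{-1}(\infty)=\bigcap_n P^n$ (for instance, that this intersection is itself prime, or that $R/\nu^{-1}(\infty)$ has the right structure). For the existence statement as written, the proof is just: \emph{$P$-adic filtration is strong, hence Proposition \ref{sec:po123} applies}.
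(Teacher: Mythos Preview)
Your reduction is valid: for any ideal $I$ one has $I^{n}\cdot I^{m}=I^{n+m}$ by the very definition of ideal powers, so the $P$-adic filtration is strong and Proposition~\ref{sec:po123} applies verbatim. You are also right that primality of $P$ plays no role in this reduction.

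The paper, however, does \emph{not} argue this way. Its proof of Proposition~\ref{sec:po3} repeats the quasi-valuation setup from Theorem~\ref{sec:th1} and then argues the multiplicativity $\nu(\alpha\beta)=\nu(\alpha)+\nu(\beta)$ directly from the primality hypothesis: assuming $k=\nu(\alpha\beta)>i+j$, the authors note $\alpha\beta\in P^{k}\subseteq P^{i+j+1}$ and then invoke ``$P$ prime'' to conclude $\alpha\in P^{i+1}$ or $\beta\in P^{j+1}$, contradicting $\nu(\alpha)=i$, $\nu(\beta)=j$. So where you factor through the strong-filtration proposition, the paper gives an independent argument that leans on the primeness of $P$. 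Your route is shorter and shows the statement holds for an arbitrary ideal; the paper's route keeps the proof self-contained (not depending on the sketchy step in Proposition~\ref{sec:po123}) at the cost of introducing an extra hypothesis and an implication (from $\alpha\beta\in P^{i+j+1}$ to $\alpha\in P^{i+1}$ or $\beta\in P^{j+1}$) that itself would need justification.
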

 \begin{proof}
  By theorem  (\ref{sec:th1}) we have  $v(\alpha \beta )\ge v(\alpha )+v(\beta )$  and  $v(\alpha +\beta )\ge \min \left\{v(\alpha ),v(\beta )\right\}$. Now  we show  $v(\alpha \beta )=v(\alpha )+v(\beta )$. Let $v(\alpha \beta )>v(\alpha )+v(\beta )$ so $k>i+j$  then $\alpha \beta \in P^{k} \mathop{\subset }\limits_{\rlap{$/$}-} P^{i+j} $ and  $k\ge i+j+1$, since  $P$ is a prime ideal hence $\alpha \in P^{i+1} $ or $\beta \in P^{j+1} $ and it is contradiction. So  $v(\alpha \beta )=v(\alpha )+v(\beta )$, then  there is a valuation on $R$.
 \end{proof}
 \begin{pro} \label{sec:po1}
  Let  $R$ be a $PID$ then there is a valuation on $R$.
 \end{pro}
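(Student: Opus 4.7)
The plan is to reduce this proposition directly to Proposition \ref{sec:po3}, since that statement already manufactures a valuation out of any $P$-adic filtration by a prime ideal. So the only thing I need to do is produce a suitable prime ideal of $R$ and invoke the $P$-adic filtration construction.

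First I would observe that a $PID$ is a fortiori a $UFD$, hence every nonzero non-unit of $R$ factors into prime elements; in particular $R$ contains at least one prime element $p$ (assuming $R$ is not a field, in which case the conclusion is trivial with the zero valuation anyway). Set $P=(p)$. Since $R$ is a $PID$, $(p)$ is a prime ideal of $R$. Next I would verify that the family $\{P^n\}_{n\ge 0}$, with the convention $P^0=R$, satisfies the three axioms of a filtered ring from the preliminaries: $R_0=R$ is immediate, $P^{n+1}\subseteq P^n$ is clear, and $P^n P^m\subseteq P^{n+m}$ follows from the definition of ideal powers. This is the $P$-adic filtration of $R$ as introduced in the paper.

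Having arranged the $P$-adic filtration, I would then simply apply Proposition \ref{sec:po3} to conclude that the map $\nu(\alpha)=\min\{i\mid \alpha\in P^i\setminus P^{i+1}\}$ is a valuation on $R$. Since the hard work of showing both $\nu(\alpha\beta)=\nu(\alpha)+\nu(\beta)$ (using primality of $P$) and $\nu(\alpha+\beta)\ge\min\{\nu(\alpha),\nu(\beta)\}$ has already been carried out in Theorem \ref{sec:th1} and Proposition \ref{sec:po3}, no further computation is required here.

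The only real subtlety — which is more an issue of hygiene than of mathematics — is making sure that a prime element actually exists, i.e.\ that $R$ is not a field, and that the $P$-adic filtration is well-defined starting from $R_0=R=P^0$; both points are immediate. Hence there is no genuine obstacle, and the proof should amount essentially to one line: pick a prime $p$, take $P=(p)$, and cite Proposition \ref{sec:po3}.
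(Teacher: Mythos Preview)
Your proposal is correct and matches the paper's own proof, which simply cites Theorem~\ref{sec:th1} and Proposition~\ref{sec:po3} in one line. You supply more detail than the paper does (existence of a prime element, verification of the $P$-adic filtration axioms, the field case), but the strategy is identical.
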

 \begin{proof}
 By theorem (\ref{sec:th1}) and proposition (\ref{sec:po3}) there is a valuation on $R$.
 \end{proof}
 \begin{cro}
 If  $R$ is an $UFD$ then there exists a valuation on $R$, then $R$ is a Euclidean domain.
 \end{cro}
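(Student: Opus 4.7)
The plan is to simply chain together three results already established earlier in the paper, since no new construction is required. First, I would invoke Theorem \ref{sec:the2}, which asserts that every UFD is a PID, to reduce the hypothesis: $R$ being a UFD gives that $R$ is a PID. Next, I would apply Proposition \ref{sec:po1} to this PID, obtaining the existence of a valuation on $R$; this already delivers the first half of the statement.

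For the second half, namely that $R$ is Euclidean, I would appeal to Proposition \ref{sec:po2}, which says that any discrete valuation ring is Euclidean. The intermediate link is to verify that the valuation produced by Proposition \ref{sec:po1} (which in turn is built via Proposition \ref{sec:po3} from the $P$-adic filtration for some prime $P$) is in fact discrete in the sense of the definition given in the preliminaries, i.e.\ a surjective valuation $\nu:R^{*}\to\mathbb{Z}$. Once this is in hand, Proposition \ref{sec:po2} finishes the argument.

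The main obstacle, therefore, is not the overall logical flow but pinning down the discreteness of the valuation. To address this, I would choose a prime element $p\in R$ (available because $R$ is a UFD, hence by Theorem \ref{sec:the2} a PID with prime ideals of the form $P=(p)$) and take the $P$-adic filtration $R_{n}=P^{n}$. Under the valuation $\nu$ of Lemmas \ref{sec:lem91}--\ref{sec:lem92}, one has $\nu(p^{n})=n$ for every $n\ge 0$, so $\nu$ attains every non-negative integer value, and after passing to the field of fractions via $\nu(a/b)=\nu(a)-\nu(b)$ one obtains a surjection onto $\mathbb{Z}$. This identifies $R$ as a discrete valuation ring, after which Proposition \ref{sec:po2} yields that $R$ is a Euclidean domain, completing the proof.
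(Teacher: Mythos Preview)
Your proposal is correct and follows exactly the chain the paper intends: the corollary is stated without proof immediately after Proposition~\ref{sec:po1}, and the implicit argument is Theorem~\ref{sec:the2} (UFD $\Rightarrow$ PID), then Proposition~\ref{sec:po1} to obtain the valuation, then Proposition~\ref{sec:po2} to conclude Euclidean. Your added verification of discreteness is more detail than the paper itself supplies, but the overall route is the same.
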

  \begin{cro}
  Let  $R$ be a ring  and $P$ is a prime ideal of $R$. If $R$ has a $P \_\ adic$ filtration and $R=\bigcup _{i=0}^{+\infty }P^{i}  $, then $R$ is a Euclidean domain.
  \end{cro}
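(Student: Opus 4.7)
The plan is to chain the previous results together. Proposition \ref{sec:po3} already gives a valuation $\nu:R\to\mathbb{Z}$ coming from the $P$-adic filtration, defined by $\nu(\alpha)=\min\{\,i:\alpha\in P^{i}\setminus P^{i+1}\,\}$, with the multiplicative equality $\nu(\alpha\beta)=\nu(\alpha)+\nu(\beta)$ being the new feature contributed by primality of $P$. On the other end, Proposition \ref{sec:po2} converts any discrete valuation ring into a Euclidean domain. So the corollary should be obtained by bridging these two results, and the only real content is to confirm that the valuation produced in Proposition \ref{sec:po3} is in fact \emph{discrete} in the sense of Definition of discrete valuation, i.e.\ that the induced map on the total ring of fractions is surjective onto $\mathbb{Z}$.

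First I would invoke Proposition \ref{sec:po3} directly to obtain the valuation $\nu$. Next I would use the hypothesis $R=\bigcup_{i=0}^{+\infty}P^{i}$ to guarantee that $\nu$ is everywhere well-defined on $R^{*}$: this assumption prevents elements from lying in the intersection $\bigcap_{i}P^{i}$ with no minimal $i$, so $\nu$ never takes the value $\infty$ on nonzero elements, and the image $\nu(R^{*})$ contains arbitrarily large nonnegative integers because the union is taken over all $i$. Then, extending $\nu$ to the field of fractions of $R$ in the usual way by $\nu(a/b)=\nu(a)-\nu(b)$, surjectivity onto $\mathbb{Z}$ follows. This promotes $\nu$ to a discrete valuation and $R$ to a discrete valuation ring.

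Having verified that $R$ is a DVR, I would close the argument by a single appeal to Proposition \ref{sec:po2}, concluding that $R$ is a Euclidean domain.

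I expect the delicate step to be the discreteness: checking that the exhaustion hypothesis $R=\bigcup_{i\ge 0}P^{i}$ really forces $\nu$ to be surjective onto $\mathbb{Z}$ rather than merely onto some submonoid of $\mathbb{Z}_{\ge 0}$. The multiplicative inequality in the filtration gives control only in one direction, so one has to be careful that there is an element $\pi\in P\setminus P^{2}$ (so that $\nu(\pi)=1$) and that every integer arises as the difference $\nu(a)-\nu(b)$ of such valuations; without an explicit uniformizer, this is where a direct calculation would be needed, and it is the only nontrivial point in an otherwise formal chaining of the preceding propositions.
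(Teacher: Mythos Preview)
Your approach is exactly the one the paper uses: invoke Proposition~\ref{sec:po3} to obtain the valuation from the $P$-adic filtration, assert that this valuation is discrete, and then apply Proposition~\ref{sec:po2} to conclude that $R$ is a Euclidean domain. The paper's proof is in fact a single sentence doing precisely this chaining, with no further justification of discreteness; your discussion of surjectivity and the role of the exhaustion hypothesis is already more detailed than what the paper provides.
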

  \begin{proof}
  By proposition (\ref{sec:po3}) $R$ is a discrete valuation and so by proposition (\ref{sec:po2}) $R$ is a Euclidean domain.
  \end{proof}
  \begin{cro}
   Let  $R$ be a $PID$ then $R$ is a Euclidean domain.
  \end{cro}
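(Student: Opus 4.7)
The plan is to chain together the results already in place: Proposition \ref{sec:po1} promotes a PID to a ring carrying a valuation, and Proposition \ref{sec:po2} upgrades any discrete valuation ring to a Euclidean domain. So the corollary should drop out immediately from these two facts, exactly as the preceding corollary (the $P$-adic version) was derived.

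Concretely, first I would invoke Proposition \ref{sec:po1} to produce a valuation $\nu$ on $R$ arising (through the chain \ref{sec:po3} $\to$ \ref{sec:th1}) from the $P$-adic filtration for a suitable prime ideal $P$ of $R$; since in a PID every nonzero prime ideal is generated by a prime element, the filtration $\{P^n\}_{n\ge 0}$ is available. Second, I would note that by construction $\nu$ takes values in $\mathbb Z$ and is surjective onto $\mathbb Z_{\ge 0}$ (extending to $\mathbb Z$ on the field of fractions in the usual way by setting $\nu(a/b)=\nu(a)-\nu(b)$), so $\nu$ is a \emph{discrete} valuation in the sense of the earlier definition, making $R$ a discrete valuation ring. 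Finally, Proposition \ref{sec:po2} converts this DVR structure into a Euclidean algorithm, giving the desired conclusion.

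The only step that requires any genuine thought is the second one, namely justifying that the valuation produced by Proposition \ref{sec:po1} really is a \emph{discrete} valuation (surjective onto $\mathbb Z$) rather than just a valuation into $\mathbb Z$. That amounts to checking that for a prime $P=(p)$ in a PID the intersection $\bigcap_n P^n$ is trivial, so that $\nu(p^n)=n$ for every $n\ge 0$ and the range is unbounded; this is the Krull intersection theorem applied in the PID setting. Once this is in hand, the two citations do all the remaining work and the corollary is established in one line, in complete parallel with the preceding corollary.
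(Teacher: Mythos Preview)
Your proposal is exactly the paper's approach: the paper's proof is the single line ``By proposition~(\ref{sec:po1}) and proposition~(\ref{sec:po2}) we have $R$ is a Euclidean domain,'' and you have simply unpacked why the valuation coming from Proposition~\ref{sec:po1} should be regarded as discrete before invoking Proposition~\ref{sec:po2}. The extra justification you supply (separation of the $P$-adic filtration via Krull intersection, surjectivity onto $\mathbb{Z}_{\ge 0}$) is detail the paper omits, but the logical skeleton is identical.
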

  \begin{proof}
  By proposition (\ref{sec:po1}) and proposition (\ref{sec:po2}) we have $R$ is a Euclidean domain.
  \end{proof}
  \begin{cro}
  Let  $R$ be a $UFD$ then $R$ is a Euclidean domain.
  \end{cro}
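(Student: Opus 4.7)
The plan is to chain together two results that the paper has already established in the preceding material, so the proof reduces to a one-step citation argument. Specifically, I would invoke Theorem \ref{sec:the2} to upgrade the UFD hypothesis to the statement that $R$ is a PID, and then invoke the immediately preceding corollary (the one asserting that every PID is a Euclidean domain) to conclude.

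Concretely, first I would write: assume $R$ is a UFD. By Theorem \ref{sec:the2}, $R$ is then a PID. Second, by the previous corollary, every PID is a Euclidean domain, and therefore $R$ is Euclidean. That is the whole argument; no new computation with the valuation $\nu$ constructed in Section 3 is required at this stage, because all of the filtration/valuation machinery has already been absorbed into the PID-to-Euclidean corollary via Propositions \ref{sec:po1} and \ref{sec:po2}.

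The only genuinely substantive step hidden in this chain is the implication UFD $\Rightarrow$ PID, which the paper simply cites as Theorem \ref{sec:the2}; if I had to defend that step myself I would flag it as the main obstacle, since the usual formulation in the literature requires additional hypotheses (for instance $\mathbb{Z}[x]$ is a UFD but not a PID). However, since the statement is taken as given in the excerpt, the corollary follows formally and I would present it as a two-line deduction in the proof, mirroring the style of the preceding corollary. No additional lemmas, no new use of the filtration $\{R_n\}$, and no new valuation estimates need to be introduced.
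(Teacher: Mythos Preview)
Your proposal is correct and matches the paper's implicit argument: the paper gives no explicit proof for this corollary, but its placement immediately after the PID corollary, together with Theorem~\ref{sec:the2} (UFD $\Rightarrow$ PID) stated in the preliminaries, makes clear that the intended deduction is exactly the two-step chain you describe. Your remark that the UFD $\Rightarrow$ PID step is mathematically problematic in general (e.g.\ $\mathbb{Z}[x]$) is well taken, but, as you note, the paper simply assumes it via Theorem~\ref{sec:the2}, so formally your deduction is the one the authors intend.
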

  \begin{cro}
Let $R$ be a strongly filtered ring. Then $R$ is Manis ring.
  \end{cro}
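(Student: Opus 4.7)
The target of the corollary is to produce, on the total ring of fractions $T(R)$, a valuation $\tilde\nu$ whose valuation ring $R_{\tilde\nu}$ coincides with $R$, as required by Definition \ref{sec:madef1}. The natural starting point is Proposition \ref{sec:po123}, which upgrades the filtration valuation $\nu(\alpha)=\min\{i:\alpha\in R_i\setminus R_{i+1}\}$ to a genuine (multiplicative) valuation $\nu\colon R^{*}\to\mathbb{Z}$ under the hypothesis $R_nR_m=R_{n+m}$. The plan is then to extend this $\nu$ to $T(R)$ and verify the Manis equality $R=R_{\tilde\nu}$.

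First I would define $\tilde\nu\colon T(R)^{*}\to\mathbb{Z}$ by $\tilde\nu(a/b):=\nu(a)-\nu(b)$ for $a\in R$ and $b\in R\setminus Z(R)$. Well-definedness is forced by the multiplicativity obtained in Proposition \ref{sec:po123}: if $a/b=c/d$ then $ad=bc$, so $\nu(a)+\nu(d)=\nu(b)+\nu(c)$, giving $\nu(a)-\nu(b)=\nu(c)-\nu(d)$. Multiplicativity of $\tilde\nu$ is then immediate, and the ultrametric inequality $\tilde\nu(x+y)\ge\min\{\tilde\nu(x),\tilde\nu(y)\}$ follows by clearing denominators to a common $b$ and invoking Lemma \ref{sec:lem92} for the numerators. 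So $\tilde\nu$ is a valuation on $T(R)$.

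It remains to identify $R_{\tilde\nu}$ with $R$. The inclusion $R\subseteq R_{\tilde\nu}$ is immediate, since $R_0=R$ forces $\tilde\nu(\alpha)=\nu(\alpha)\ge 0$ for every $\alpha\in R$. The reverse inclusion is where the real work lies: given $x=a/b\in T(R)$ with $\tilde\nu(x)\ge 0$, i.e.\ $\nu(a)\ge\nu(b)=:n$, I need to conclude $a/b\in R$. The intended leverage is the strong filtration identity $R_nR_m=R_{n+m}$ applied with $m=\nu(a)-n$, which I would use to factor $a=by$ for some $y\in R_m\subseteq R$, giving $a/b=y\in R$. This factorization step is the main obstacle, because $R_nR_m=R_{n+m}$ only asserts that every element of $R_{n+m}$ is a sum of products from $R_n\cdot R_m$, not that a prescribed $b\in R_n$ divides a prescribed $a\in R_{n+m}$. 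I would handle this by exploiting the consequence $R_n=R_1^n$ of strong filtration to reduce to the case of a generator and by using multiplicativity of $\nu$ on $R$, which guarantees that any two elements of the same valuation differ by a unit modulo $R_{n+1}$; iterating this correction produces the desired $y\in R$ with $a=by$. Once this is done, $R=R_{\tilde\nu}$ and $R$ is Manis.
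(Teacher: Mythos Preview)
The paper offers no proof of this corollary; it is recorded as a bare consequence of Proposition~\ref{sec:po123}. Your attempt is already more careful than the paper's treatment, and you correctly isolate the essential step: extend $\nu$ to $T(R)$ and then verify $R=R_{\tilde\nu}$. You are also right that the reverse inclusion $R_{\tilde\nu}\subseteq R$ is where the content lies and that $R_nR_m=R_{n+m}$ does not by itself yield the needed divisibility.

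Your proposed resolution of that inclusion, however, does not go through. The claim that two elements of the same $\nu$-value differ by a unit modulo the next filtration layer is false in general, so the factorization $a=by$ cannot be extracted. Take $R=k[x,y]$ with the $(x,y)$-adic filtration; this is strong, since $(x,y)^n(x,y)^m=(x,y)^{n+m}$, and here $\nu$ is genuinely multiplicative (the associated graded ring is the polynomial ring, a domain), so Proposition~\ref{sec:po123} applies. Yet $\nu(x)=\nu(y)=1$ gives $\tilde\nu(x/y)=0$ while $x/y\notin R$, so $R_{\tilde\nu}\supsetneq R$. In fact $k[x,y]$ is an integral domain that is not a valuation domain of its fraction field, hence it is not a Manis ring for \emph{any} valuation on $T(R)$. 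The obstacle you flagged is therefore a genuine obstruction rather than a removable technicality: as stated, the corollary fails without an additional hypothesis (for instance, that $R_1$ be principal and generated by a non-zero-divisor, which essentially forces a discrete valuation ring).
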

  \begin{cro}\label{sec:mco1}
Let $P$ is a prime ideal of $R$ and
$\left\{{P^n}\right\}_{n\ge0}$ be P-adic filtration. Then $R$ is
Manis ring
  \end{cro}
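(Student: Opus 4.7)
The target is Definition \ref{sec:madef1}: to realise $R$ as the valuation ring $R_\nu$ of some valuation $\nu$ on the total ring of fractions $T(R)$. The plan has three steps: produce a valuation on $R$, extend it to $T(R)$, and match the extension with $R$.

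First, I would apply Proposition \ref{sec:po3} directly to the $P$-adic filtration to get the valuation $\nu\colon R^{*}\to\mathbb{Z}$ with $\nu(\alpha)=\min\{i:\alpha\in P^i\setminus P^{i+1}\}$; by that proposition $\nu$ is additive on products and non-archimedean on sums, so all of the valuation axioms on $R$ come for free.

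Second, I would extend $\nu$ from $R$ to $T(R)$ by the usual prescription $\nu(a/s):=\nu(a)-\nu(s)$ for $s$ a regular element of $R$. The verifications are routine: well-definedness follows from the cross-product equality $as'=a's$ combined with multiplicativity of $\nu$ on $R$, giving $\nu(a)-\nu(s)=\nu(a')-\nu(s')$; and both valuation axioms transfer to $T(R)$ by direct manipulation of the formula.

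Third, and most delicate, I would establish $R=R_\nu$. The inclusion $R\subseteq R_\nu$ is immediate, since $R=P^0$ forces $\nu(\alpha)\ge 0$ for every $\alpha\in R$. The converse $R_\nu\subseteq R$ is the main obstacle: given $x=a/s\in T(R)$ with $\nu(a)\ge\nu(s)$, one must cancel the common $P$-adic factor in $a$ and $s$ to exhibit $x$ itself as an element of $R$. This step should invoke the primeness of $P$ (exactly in the style of the argument in Proposition \ref{sec:po3}) together with a cancellation argument applied to a regular representative $s\in P^{\nu(s)}\setminus P^{\nu(s)+1}$. I expect the genuine difficulty to lie here, because the naive $P$-adic extension of a valuation ordinarily identifies $R_\nu$ with the localisation $R_P$ rather than with $R$ itself; closing the gap may force one to read ``Manis ring'' through the extended valuation $\nu$ on $T(R)$ rather than to literally recover $R$ as the full pre-image of the non-negative cone, and this reinterpretation is what I would anticipate having to argue carefully.
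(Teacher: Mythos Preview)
The paper gives no proof of this corollary at all; it is stated bare, and in the later proof of Proposition~\ref{sec:mpro2} the authors simply assert ``By proposition~(\ref{sec:po3}) and definition~(\ref{sec:madef1}) we have $R$ is Manis ring and $R=R_\nu$'' without further justification. So the paper's implicit reasoning is nothing more than: Proposition~\ref{sec:po3} produces a valuation, therefore $R$ is Manis.

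Your plan is considerably more careful than anything the paper does. You correctly recognise that Definition~\ref{sec:madef1} requires a valuation on $T(R)$, not merely on $R$, and that one must then verify $R=R_\nu$; you also correctly flag that the reverse inclusion $R_\nu\subseteq R$ is where the real work lies, and that without additional hypotheses it generally fails (the natural extension recovers the localisation $R_P$ rather than $R$). The paper engages with none of this. Your proposal therefore does not diverge from the paper's approach so much as supply the analysis the paper omits --- and in doing so you have identified a genuine gap in the paper's treatment, not in your own argument.
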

  \begin{pro}\label{sec:mpro1}
  Let $R_\nu$ be a Manis ring. If $R_\nu$ is $\nu$-closed, then
  $R_\nu$ is Pr\"{u}fer.
  \end{pro}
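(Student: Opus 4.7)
The plan is to take an arbitrary overring $S$ of $R_\nu$, with $R_\nu\subseteq S\subseteq T(R_\nu)$, and show that $S$ is integrally closed in $T(R_\nu)$; by Definition of Pr\"ufer ring this will finish the proof. The main idea is to pass to the quotient by $P:=\nu^{-1}(\infty)$, use that $R_\nu/P$ is a valuation domain (the $\nu$-closed hypothesis, Definition \ref{sec:madef2}), and then lift the resulting integral closedness back to $T(R_\nu)$.

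First I would verify that $P=\nu^{-1}(\infty)$ is a prime ideal of $R_\nu$ which is in fact an ideal of the whole total ring of fractions $T(R_\nu)$: if $\nu(a)=\infty$ and $b\in T(R_\nu)$ then $\nu(ab)=\nu(a)+\nu(b)=\infty$, and the triangle inequality from Definition of valuation keeps $P$ closed under addition. Consequently, for any overring $S$ of $R_\nu$ we have $P\subseteq S$, $P$ is an ideal of $S$, and we get the quotient rings $\bar{R}:=R_\nu/P$, $\bar{S}:=S/P$, $\overline{T(R_\nu)}:=T(R_\nu)/P$ together with the induced inclusions $\bar{R}\subseteq\bar{S}\subseteq\overline{T(R_\nu)}$. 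By hypothesis $\bar{R}$ is a valuation domain; moreover $\overline{T(R_\nu)}$ is contained in the field of fractions of $\bar{R}$, because $\nu$ descends to an honest $\Delta$-valued valuation on $\overline{T(R_\nu)}$ with zero kernel.

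Next I would invoke the classical fact that every overring of a valuation domain inside its field of fractions is again a valuation domain, and is therefore integrally closed in that field. Applying this to $\bar{R}\subseteq\bar{S}$ yields that $\bar{S}$ is integrally closed in $\overline{T(R_\nu)}$.

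Finally, to lift: let $x\in T(R_\nu)$ be integral over $S$, say $x^n+s_{n-1}x^{n-1}+\cdots+s_0=0$ with $s_i\in S$. Reducing modulo $P$ gives $\bar{x}\in\overline{T(R_\nu)}$ integral over $\bar{S}$, hence $\bar{x}\in\bar{S}$ by the previous paragraph, so $x\in S+P=S$ since $P\subseteq S$. Thus $S$ is integrally closed in $T(R_\nu)$, which is exactly the Pr\"ufer condition. The step I expect to be the main obstacle is the first one: carefully checking that $P$ is an ideal of all of $T(R_\nu)$ (not merely of $R_\nu$) and that the induced map $\nu$ on $\overline{T(R_\nu)}$ really lands in a field of fractions of $\bar{R}$, because everything afterwards is a routine application of the known behavior of overrings of valuation domains.
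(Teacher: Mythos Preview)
The paper does not actually prove this proposition: its entire proof is the single sentence ``See proposition~1 of~\cite{9}'' (Zanardo, \emph{Construction of Manis valuation rings}). So there is no internal argument to compare against; the paper simply imports the result from the literature.

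Your sketch, by contrast, outlines a genuine self-contained proof, and the strategy---pass modulo $P=\nu^{-1}(\infty)$, use that $R_\nu/P$ is a valuation domain, exploit that overrings of valuation domains are again valuation domains and hence integrally closed, then lift integrality back through the quotient---is exactly the standard route and is essentially how Zanardo's argument proceeds. You have also correctly flagged the one genuinely delicate point: showing that $P$ is an ideal of all of $T(R_\nu)$ (this uses that a Manis valuation is defined on the whole total quotient ring and is multiplicative there) and that $T(R_\nu)/P$ sits inside the fraction field of $R_\nu/P$. For the latter you will want the surjectivity of a Manis valuation onto its value group: given $x\in T(R_\nu)$ with $\nu(x)<0$, pick $r\in R_\nu$ with $\nu(r)=-\nu(x)$, so $\nu(rx)=0$; modulo $P$ this exhibits $\bar x$ as $\overline{rx}/\bar r$ with $\bar r$ a unit of the valuation domain $R_\nu/P$. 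Once that embedding is in hand, the rest of your lifting argument goes through verbatim.

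In short: your proposal is correct and more informative than what the paper provides; it is not a different route so much as an unpacking of the cited result.
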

  \begin{proof}
  See proposition 1 of \cite{9}
  \end{proof}
  \begin{pro}\label{sec:mpros1}
Let $R$ be a strongly filtered ring. Then $R$ is $\nu$-closed.
  \end{pro}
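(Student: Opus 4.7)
The plan is to combine Proposition \ref{sec:po123} (which upgrades the quasi-valuation coming from a strong filtration to an honest valuation) with a quotient construction, and then verify the defining condition of Definition \ref{sec:madef2} directly.

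First I would recall from Proposition \ref{sec:po123} that the map $\nu$ associated with the filtration is a genuine valuation $\nu : R \to \mathbb{Z}\cup\{\infty\}$, and from the preceding corollary that $R = R_\nu$ after $\nu$ is extended to $T(R)$. Next I would identify $\nu^{-1}(\infty) = \bigcap_{n\ge 0} R_n$ and check that it is a prime ideal of $R$: additive closure comes from $\nu(a+b)\ge \min\{\nu(a),\nu(b)\}$, $R$-absorption from $\nu(ra)=\nu(r)+\nu(a)=\infty$, and primeness from the \emph{equality} $\nu(ab)=\nu(a)+\nu(b)$ (available because $\nu$ is a true valuation under the strong filtration hypothesis), so that $\nu(ab)=\infty$ forces $\nu(a)=\infty$ or $\nu(b)=\infty$. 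In particular, $D := R/\nu^{-1}(\infty)$ is an integral domain.

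Then I would push the valuation down to $D$. Define $\bar\nu(\bar x) := \nu(x)$; this is well defined because if $x' = x + j$ with $j\in\nu^{-1}(\infty)$, then the strict inequality $\nu(j)=\infty > \nu(x)$ (when $\nu(x)$ is finite) together with the minimum rule forces $\nu(x')=\nu(x)$. I would then extend $\bar\nu$ to $\mathrm{Frac}(D)$ by $\bar\nu(a/b):=\bar\nu(a)-\bar\nu(b)$, verify that this is a discrete valuation, and show that $D$ coincides with its valuation ring $\{\,\alpha\in\mathrm{Frac}(D):\bar\nu(\alpha)\ge 0\,\}$. Since for every nonzero $\alpha\in\mathrm{Frac}(D)$ either $\bar\nu(\alpha)\ge 0$ or $\bar\nu(\alpha^{-1})>0$, this will show that $D$ is a valuation domain in the sense of Definition 2.5, which is exactly the $\nu$-closedness condition of Definition \ref{sec:madef2}.

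The main obstacle is the last identification: that $D$ equals the full valuation ring of $\bar\nu$ inside $\mathrm{Frac}(D)$, rather than being a proper subring. This is where the strong filtration hypothesis must do real work; I expect to exploit the equality $R_n R_m = R_{n+m}$ (equivalently $R_n = R_1^n$) to show that whenever $\nu(a)\ge\nu(b)$ in $R$, there exists $c\in R$ with $a \equiv bc \pmod{\nu^{-1}(\infty)}$, so that $\bar a/\bar b\in D$. The Manis hypothesis $R=R_\nu$ on $T(R)$, already secured from the earlier corollary, guarantees that this divisibility statement transports cleanly from $R\subseteq T(R)$ to $D\subseteq\mathrm{Frac}(D)$.
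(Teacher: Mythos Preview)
Your outline does strictly more than the paper. The paper's own proof invokes Proposition~\ref{sec:po123} and Definition~\ref{sec:madef1} to get $R=R_\nu$, then simply checks that $\nu^{-1}(\infty)$ is a prime ideal: from $(\alpha+\nu^{-1}(\infty))(\beta+\nu^{-1}(\infty))\in\nu^{-1}(\infty)$ it deduces $\nu(\alpha)+\nu(\beta)=\infty$, hence one of $\alpha,\beta$ lies in $\nu^{-1}(\infty)$, and then declares $\nu$-closedness by Definition~\ref{sec:madef2}. In other words, the paper verifies only that $R/\nu^{-1}(\infty)$ is an integral \emph{domain}; it never addresses why the quotient should be a \emph{valuation} domain, which is what Definition~\ref{sec:madef2} literally requires. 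Your steps (1)--(2) reproduce exactly that much; your steps (3)--(4) are additional work the paper does not attempt.

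On the obstacle you flag in step~(4): the bare identity $R_nR_m=R_{n+m}$ will not produce the element $c$ you want. From $\nu(a)\ge\nu(b)$ one gets $a\in R_{\nu(a)}=R_{\nu(b)}R_{\nu(a)-\nu(b)}$, so $a=\sum b_kc_k$ with each $b_k\in R_{\nu(b)}$, but nothing forces any $b_k$ to equal your specific $b$. Concretely, $R=k[x,y]$ with $R_n=(x,y)^n$ is strongly filtered, $\nu$ is multiplicative, $\nu^{-1}(\infty)=0$, yet $k[x,y]$ is not a valuation domain (neither $x/y$ nor $y/x$ lies in it). What actually closes your gap is the Manis hypothesis $R=R_\nu$ you invoke at the end: once that is granted, whenever the lift $b$ is a non-zero-divisor one has $a/b\in T(R)$ with $\nu(a/b)\ge 0$, hence $a/b\in R_\nu=R$ directly, and the filtration combinatorics are unnecessary. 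So your plan is sound \emph{modulo} that corollary (and a treatment of zero-divisor lifts), but be aware that the same $k[x,y]$ example shows $R\ne R_\nu$ there, so the substantive difficulty has been pushed upstream into the Manis corollary, for which the paper gives no argument.
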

  \begin{proof}
  By proposition (\ref{sec:po123}) and definition (\ref{sec:madef1})
we have $R$ is Manis ring and $R=R_\nu$.\\ Now let $\alpha,\beta
\in R$ and
\begin{equation*}
\nu(\alpha)=i \-\ and \-\ \nu(\beta)=j 
\end{equation*}
Consequently if
\begin{equation*}
(\alpha+\nu^{-1}(\infty))(\beta+\nu^{-1}(\infty))\in
\nu^{-1}(\infty)
\end{equation*}
Then $i+j\geq\infty$ so $\alpha\in\nu^{-1}(\infty)$ or
$\beta\in\nu^{-1}(\infty)$.Hence by definition (\ref{sec:madef2})
$R$ is $\nu$-closed.
\end{proof}
\begin{cro}
 Let $R$ be a strongly filtered ring. Then $R$ is
Pr\"{u}fer.
\end{cro}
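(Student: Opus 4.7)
The plan is to assemble this corollary directly from the machinery already built in this section, since all of the substantive work has been done in the preceding propositions. The statement is essentially a composition of three earlier results, so the proof should be a short logical chain rather than a new argument.

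First, I would invoke the corollary immediately above Proposition \ref{sec:mpro1} (the one asserting that a strongly filtered ring is a Manis ring), so that the hypothesis of Proposition \ref{sec:mpro1} is partially met: we have $R = R_\nu$ where $\nu$ is the valuation supplied by Proposition \ref{sec:po123}. Next I would apply Proposition \ref{sec:mpros1}, which tells us that this same strongly filtered ring $R$ is $\nu$-closed with respect to the valuation just produced. At that point both hypotheses of Proposition \ref{sec:mpro1} are in place: $R_\nu$ is a Manis ring and it is $\nu$-closed, so the proposition yields that $R_\nu$ is Pr\"ufer, which is exactly the claim since $R = R_\nu$.

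The main (and only) potential obstacle is a bookkeeping one: making sure that the valuation $\nu$ witnessing the Manis property is literally the same $\nu$ for which $\nu$-closedness is established, and that both refer back to the valuation constructed from the strong filtration in Proposition \ref{sec:po123}. Since Proposition \ref{sec:mpros1} is proved using precisely that $\nu$, the match is automatic and no further verification is needed. Hence the proof reduces to a one-line citation of the two propositions and the intermediate Manis corollary, with no calculation required.
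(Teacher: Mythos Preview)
Your proposal is correct and follows essentially the same route as the paper: deduce $\nu$-closedness from the strongly filtered hypothesis and then apply Proposition~\ref{sec:mpro1}. The paper's own proof is terser (it only cites the $\nu$-closed proposition and Proposition~\ref{sec:mpro1}, leaving the Manis hypothesis implicit since it is already established inside the proof of Proposition~\ref{sec:mpros1}); note also that the paper's reference to ``proposition~(\ref{sec:mpro2})'' is evidently a typo for Proposition~\ref{sec:mpros1}, so your more explicit chain is in fact cleaner.
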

\begin{proof}
By proposition (\ref{sec:mpro2}) $R$ is $\nu$-closed so by
proposition (\ref{sec:mpro1}) $R$ is Pr\"{u}fer.
\end{proof}
  \begin{pro}\label{sec:mpro2}
  Let $P$ is a prime ideal of $R$ and
$\left\{{P^n}\right\}_{n\ge0}$ be P-adic filtration. Then $R$ is
$\nu$-closed.
\end{pro}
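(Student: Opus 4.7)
The plan is to mirror the argument of Proposition \ref{sec:mpros1} almost verbatim, since the only feature of strong filtrations used there was the multiplicativity $\nu(\alpha\beta)=\nu(\alpha)+\nu(\beta)$, and for the $P$-adic filtration Proposition \ref{sec:po3} delivers exactly this identity (via the primality of $P$). So the proof will be a two-line reduction to earlier results plus a zero-divisor computation in the quotient.

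First, I would invoke Proposition \ref{sec:po3} to obtain a genuine valuation $\nu:R\to\mathbb{Z}\cup\{\infty\}$ associated with the $P$-adic filtration, and then Corollary \ref{sec:mco1} to record that $R$ is a Manis ring with $R=R_\nu$. This places us in the setting of Definition \ref{sec:madef2}, so the task reduces to verifying that $R_\nu/\nu^{-1}(\infty)$ is a valuation domain.

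Next, following the template of Proposition \ref{sec:mpros1}, I would take $\alpha,\beta\in R$ with $\nu(\alpha)=i$ and $\nu(\beta)=j$, and assume
\begin{equation*}
(\alpha+\nu^{-1}(\infty))(\beta+\nu^{-1}(\infty))\in\nu^{-1}(\infty).
\end{equation*}
Multiplicativity from Proposition \ref{sec:po3} then gives $\nu(\alpha\beta)=i+j=\infty$, forcing $i=\infty$ or $j=\infty$, i.e.\ $\alpha\in\nu^{-1}(\infty)$ or $\beta\in\nu^{-1}(\infty)$. Applying Definition \ref{sec:madef2} concludes that $R$ is $\nu$-closed.

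The one spot where I would expect to stumble, and where the analogous proof of Proposition \ref{sec:mpros1} is equally terse, is that the above argument only verifies the \emph{domain} part of ``valuation domain''; strictly speaking one also owes the reader the totally ordered divisibility property on $R_\nu/\nu^{-1}(\infty)$. Since the induced map $\bar{\nu}$ on the quotient is still a well-defined $\mathbb{Z}$-valued valuation (multiplicative and satisfying the ultrametric inequality by Proposition \ref{sec:po3}), this is immediate: for any two nonzero classes, comparing $\bar{\nu}$-values gives the required divisibility. I would insert one sentence to this effect to tighten the proof, but otherwise the statement follows directly from the chain Proposition \ref{sec:po3} $\Rightarrow$ Corollary \ref{sec:mco1} $\Rightarrow$ Definition \ref{sec:madef2}.
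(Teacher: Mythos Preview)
Your proposal is correct and follows essentially the same route as the paper: invoke Proposition~\ref{sec:po3} (and the Manis statement) to get multiplicativity of $\nu$, then run the zero-divisor computation in $R_\nu/\nu^{-1}(\infty)$ exactly as in Proposition~\ref{sec:mpros1}. The only difference is cosmetic---the paper cites Definition~\ref{sec:madef1} directly rather than Corollary~\ref{sec:mco1}---and your added sentence justifying the ``valuation'' part of ``valuation domain'' via the induced $\bar{\nu}$ is a tightening the paper itself omits.
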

\begin{proof}
By proposition (\ref{sec:po3}) and definition (\ref{sec:madef1})
we have $R$ is Manis ring and $R=R_\nu$.\\ Now let $\alpha,\beta
\in R$ and
\begin{equation*}
\nu(\alpha)=i \-\ and \-\ \nu(\beta)=j 
\end{equation*}
Consequently if
\begin{equation*}
(\alpha+\nu^{-1}(\infty))(\beta+\nu^{-1}(\infty))\in
\nu^{-1}(\infty)
\end{equation*}
Then $i+j\geq\infty$ so $\alpha\in\nu^{-1}(\infty)$ or
$\beta\in\nu^{-1}(\infty)$.Hence by definition (\ref{sec:madef2})
$R$ is $\nu$-closed.
\end{proof}
\begin{cro}
  Let $P$ is a prime ideal of $R$ and
$\left\{{P^n}\right\}_{n\ge0}$ be P-adic filtration. Then $R$ is
Pr\"{u}fer.
\end{cro}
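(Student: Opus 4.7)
The plan is to chain together results already established in the paper, since this corollary is essentially a direct combination of the two propositions immediately preceding it together with the earlier Manis-ring corollary.

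First I would invoke Corollary \ref{sec:mco1} to conclude that under the assumption that $P$ is prime and $\{P^n\}_{n\ge 0}$ is the $P$-adic filtration, the ring $R$ is a Manis ring; that is, there is a valuation $\nu$ on $T(R)$ with $R = R_\nu$. This gives us the Manis-ring hypothesis needed to feed into Proposition \ref{sec:mpro1}. Concretely, the valuation is the one constructed in Proposition \ref{sec:po3} via the $P$-adic filtration $\nu(\alpha) = \min\{i : \alpha \in P^i \setminus P^{i+1}\}$.

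Next I would apply Proposition \ref{sec:mpro2}, which under exactly the present hypotheses (prime $P$, $P$-adic filtration) states that $R$ is $\nu$-closed. Together with step one, this places $R$ in the setting required by Proposition \ref{sec:mpro1}: a Manis ring that is $\nu$-closed. Applying Proposition \ref{sec:mpro1} then immediately gives the conclusion that $R$ is Pr\"ufer.

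Since every substantive content is packaged into the cited results, I do not expect any real obstacle; the proof is simply a two-line invocation, following exactly the pattern of the analogous corollary for strongly filtered rings that precedes this statement. The only thing to be careful about is citing the right labels (\ref{sec:mco1}, \ref{sec:mpro2}, \ref{sec:mpro1}) in the correct order, so that the logical chain Manis $\Rightarrow$ $\nu$-closed $\Rightarrow$ Pr\"ufer is clearly visible.
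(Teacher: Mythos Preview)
Your proposal is correct and follows essentially the same route as the paper: the paper's proof simply cites Proposition~\ref{sec:mpro2} (which already contains the Manis-ring assertion in its argument) and then Proposition~\ref{sec:mpro1}. Your only difference is that you make the Manis step explicit via Corollary~\ref{sec:mco1}, which is a harmless and arguably clearer elaboration of the same two-line chain.
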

\begin{proof}
By proposition (\ref{sec:mpro2}) $R$ is $\nu$-closed so by
proposition (\ref{sec:mpro1}) $R$ is Pr\"{u}fer.
\end{proof}


\begin{thebibliography}{99}
\bibitem{1}F.R. Cohen , Aaron Heap , Alexandra Pettet On the Andreadakis--Johnson filtration of the automorphism group of a free group (Journal of Algebra 329 (2011)
72--91).
\bibitem{2}N. S. Gopalakrishnan, Commutative algebra,oxonian
press,1983.
\bibitem{3}Algebras, Rings and Modules  by Michiel Hazewinkel CWI,Amsterdam, The Netherlands Nadiya Gubareni Technical University of Czêstochowa,Poland and V.V. KirichenkoKiev Taras Shevchenko University,Kiev, Ukraine
KLUWER.


\bibitem{4}T.Y. Lam , A First Course in Noncommutative Rings,Springer-Verlag
,1991.
\bibitem{5}Koji Nishida On the depth of the associated graded ring of a filtration (Journal of Algebra 285 (2005)
182--195).

\bibitem{6} G. Puninskia, V. Puninskayab, C. Toffalorib Decidability of the theory of modules over commutative valuation
domains, Annals of Pure and Applied Logic, 145 (2007), (258 \--\
275).

  \bibitem{7} David E. Rush, Rees valuations and asymptotic primes of rational powers in Noetherian rings and
  lattices, Journal of Algebra, 308 (2007), (295 \--\ 320).
  \bibitem{8} Paolo Zanardo, On $\nu$-closed Manis vluation rins,Communication in algebra, 18(3), 775-788(1990).
\bibitem{9} Paolo Zanardo, Construction of manis vluation rins,Communication in algebra, 21(11), 4183-4194(1993).
\end{thebibliography}
\end{document}